\theoremstyle{plain}
\newtheorem{thm}{Theorem}[section]  
\newtheorem{cor}[thm]{Corollary}
\newtheorem{lem}[thm]{Lemma}
\newtheorem{prop}[thm]{Proposition}
\theoremstyle{definition}
\newtheorem{defn}[thm]{Definition}
\theoremstyle{remark}
\newcommand{\R}{\mathbb{R}}
\newcommand{\U}{\mathcal{U}}
\newcommand{\I}{\mathcal{I}}
\newcommand{\X}{\mathcal{X}}
\newcommand{\Y}{\mathcal{Y}}
\newcommand{\F}{\mathcal{F}}
\DeclareMathOperator{\supp}{supp}
\DeclareMathOperator{\suc}{\uparrow}
\DeclareMathOperator{\Lev}{Lev}
\DeclareMathOperator{\htte}{ht}
\DeclareMathOperator{\Fin}{Fin}
\newcommand{\vertiii}[1]{{\left\vert\kern-0.25ex\left\vert\kern-0.25ex\left\vert #1 
    \right\vert\kern-0.25ex\right\vert\kern-0.25ex\right\vert}}
\begin{document}
\title{On bisequentiality and spaces of strictly decreasing functions on trees}
\author{Claudio Agostini}
\author{Jacopo Somaglia}
\thanks{The research of the second author was supported in part by the 
Universit\`a degli Studi of Milano (Italy), in part by the Gruppo Nazionale per l'Analisi Matematica, la Probabilit\`a e le loro Applicazioni (GNAMPA) of the Istituto Nazionale di Alta Matematica (INdAM) of Italy and in part by the research grant GA\v{C}R 17-00941S}
\address{Universit\`a degli Studi di Milano, Dipartimento di Matematica ``F. Enriques'', Via Cesare Saldini 50, 20133 Milano, Italy.}

\email{claudio.agostini@studenti.unimi.it}

\address{Universit\`a degli Studi di Milano, Dipartimento di Matematica ``F. Enriques'', Via Cesare Saldini 50, 20133 Milano, Italy. \newline \indent Charles University, Faculty of Mathematics and Physics, Department of Mathematical Analysis, Sokolovsk\'{a} 83, 186 75 Praha 8, Czech Republic.}

\email{jacopo.somaglia@unimi.it}
\begin{abstract}
\noindent
We present a characterization of spaces of strictly decreasing functions on trees in terms of bisequentiality. This characterization answers Questions 6.1 and 6.2 of \cite{Ciesla}. Moreover we study the relation between these spaces and the classes of Corson, Eberlein and uniform Eberlein compacta.\\ 
\\
\noindent
\textit{MSC:} 54A20, 54C35, 54D30, 54B10\\
\\
\noindent
\textit{Keywords:} Bisequentiality, Tree, Compact space, Corson, Eberlein, Uniform Eberlein
\end{abstract}
\maketitle
\section{Introduction}

A sequence of non-empty subsets $\{A_n\}_{n\in \omega}$ of a topological space $X$ converges to a point $x\in X$ if for any neighborhood $U$ of $x$ there exists $n_0\in\omega$ such that $A_n\subset U$ for any $n>n_0$. An ultrafilter $\U\subset \mathcal{P}(X)$ converges to $x\in X$ if every neighborhood $U$ of $x$ is an element of $\U$. A topological space $X$ is \textit{bisequential at a point} $x_0\in X$ if every ultrafilter $\U$ in $X$ convergent to $x_0$ contains a decreasing sequence $\{U_n\}_{n\in\omega}$ converging to $x_0$. $X$ is said \textit{bisequential} if it is bisequential at each $x\in X$. A compact space is said \textit{Eberlein} $\langle$resp. \textit{uniform Eberlein}$\rangle$ if it embeds homeomorphically into some Banach $\langle$resp. Hilbert$\rangle$ space with its weak topology.
Uniform Eberlein compact spaces are closely related to bisequentiality, since every uniform Eberlein compact space with weight less than the first measurable cardinal is bisequential. Since the class of uniform Eberlein compacta is strictly contained within the class of Eberlein compacta (the first example can be found in \cite{BenStar}), one may wonder if the same result still holds by weakening the hypothesis and considering Eberlain compact spaces. Unfortunately this is not the case.
In \cite{Ciesla}, T. Cie\'{s}la introduced a $\sigma$-ideal on $\omega_1\times \omega_1$ in order to prove that the space of strictly decreasing functions on $\omega_1$ in itself, denoted by $\X$ or $\X_{\omega_1}$, is not bisequential (this fact was announced in \cite{Nyikos3} by P. Nyikos).
As a corollary the author obtained an alternative proof of the fact that $\X$ is not uniform Eberlein: this fact was first proved by A. G. Leiderman and G. A. Sokolov in \cite{LeidSoko} by using adequate families of sets. 
Since trees are the natural generalization of ordinal numbers, in the final section of \cite{Ciesla}, the author raises the two following questions:
\begin{itemize}
\item \textit{For which trees T of height less or equal than $\omega_1$ is $\X_T$ bisequential?}
\item \textit{Let T be an Aronszajn tree. Is $\X_T$ bisequential?}
\end{itemize}
Where $\X_T$ denotes the space of strictly decreasing functions on a tree $T$ in itself with totally ordered domain. In this paper we provide answers to both questions as follows.\\
In the remaining part of the introductory section notation and basic notions addressed in this paper are given.
In section 2 we recall the basic notions on trees and we introduce the spaces of strictly decreasing $\langle$strongly strictly decreasing$\rangle$ functions denoted by $\Y_T$ $\langle$respectively, $\X_T\rangle$.
In section 3 we provide a characterization for topological spaces to be bisequential at some point, in terms of the existence of a certain ideal on a local subbase of the point. 
Besides we answer the two aforementioned questions. 
In section 4 we investigate the spaces $\Y_T$ in terms of bisequentiality, proving that the only bisequential $\Y_T$ spaces are the ones with countable $T$.  \\
We denote with $\omega$ the set of natural numbers with the usual order. Given a set $X$ we denote by $|X|$ its cardinality, by $\Fin(X)$ the ideal of all finite subsets, by $\mathcal{P}(X)$ the power set of $X$. Given a subset $A$ of $X$ we denote the characteristic function of $A$ by $1_A$. All the topological spaces are assumed to be Hausdorff and completely regular. We refer to \cite{Jech} for basic definitions and results in set theory that are used here without a specific reference.\\
We conclude the introductory section recalling some basic notions about the classes of Corson, Eberlein, uniform Eberlein compacta and adequate families of subsets. We refer to \cite{Negre} for an old but still good survey in this area.\\
A compact space is said \textit{Corson} if it homeomorphically embeds into a $\Sigma$-product
\begin{equation*}
\Sigma(\Gamma)=\{x\in \R^{\Gamma}:|\{\gamma\in \Gamma:x(\gamma)\neq 0\}|\leq \omega_0\}.
\end{equation*}
endowed with the subspace topology inherited by $\R^{\Gamma}$, with the product topology, for some set $\Gamma$.\\
The notion of adequate families of subsets was introduced by Talagrand in \cite{Tal}. 
\begin{defn}
Let $X$ be a non-empty set, a family $\mathcal{A}$ of subsets of $X$ is called \textit{adequate} if:
\begin{enumerate}[$(i)$]
\item If $A\in \mathcal{A}$ and $B\subset A$, then $B\in \mathcal{A}$.
\item If $B\subset X$ is such that all finite subsets of $B$ belongs to $\mathcal{A}$, then $B\in \mathcal{A}$.
\end{enumerate}
\end{defn}
Every adequate family of subsets of $X$ can be viewed as a closed subset of $\{0,1\}^X$. Therefore given an adequate family $\mathcal{A}$, we call the space
\begin{equation*}
K_{\mathcal{A}}=\{1_A\in \{0,1\}^{X}: A\in\mathcal{A}\}
\end{equation*}
an \textit{adequate compact}. It follows immediately that $K_{\mathcal{A}}$ is a Corson compact space whenever $\mathcal{A}$ consists of countable sets. A characterization of adequate compact spaces in terms of Eberlein and uniform Eberlein classes is due to Leiderman and Sokolov \cite{LeidSoko}.

\begin{thm}[\cite{LeidSoko}]\label{FamAdeguate}
Let $X$ be a set and $\mathcal{A}$ be an adequate family of subsets of $X$. Then the following assertions hold:
\begin{enumerate}
\item $K_{\mathcal{A}}$ is an Eberlein compact space if and only if there is a partition $X=\bigcup_{i\in\omega}X_i$ such that $|\supp(x)\cap X_i|<\aleph_0$ for each $x\in K_{\mathcal{A}}$ and $i\in\omega_0$.
\item $K_{\mathcal{A}}$ is an uniform Eberlein compact space if and only if there is a partition $X=\bigcup_{i\in\omega}X_i$ and an integer-valued function $N(i)$ such that $|\supp(x)\cap X_i|<N(i)$ for each $x\in K_{\mathcal{A}}$ and $i\in\omega_0$.
\end{enumerate}
\end{thm}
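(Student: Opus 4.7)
The proof splits naturally into sufficiency and necessity, and part (2) is obtained as a quantitative refinement of part (1), so I would treat the two parts in parallel.

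\emph{Sufficiency $(\Leftarrow)$.} Given a partition $X=\bigcup_{i\in\omega}X_{i}$ with the stated finiteness on supports, I would construct an explicit homeomorphic embedding of $K_{\mathcal{A}}$ into a weakly-topologised Banach space. Pick positive weights $w_{i}$ tending to $0$ (for (1)), resp. satisfying $\sum_{i}N(i)w_{i}^{2}<\infty$, for example $w_{i}=2^{-i}/\sqrt{N(i)+1}$ (for (2)). Define
\[
\Phi(1_{A})(x):=w_{i}\cdot 1_{A}(x)\qquad\text{for }x\in X_{i}.
\]
The hypothesis $|\supp(1_{A})\cap X_{i}|<\aleph_{0}$ ensures $\Phi(1_{A})\in c_{0}(X)$, since for any $\varepsilon>0$ only finitely many $i$ have $w_{i}>\varepsilon$ and each contributes only finitely many nonzero coordinates; the quantitative bound gives $\Phi(K_{\mathcal{A}})\subset\ell_{2}(X)$ with norms uniformly controlled by $\sqrt{\sum_{i}N(i)w_{i}^{2}}$. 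Continuity from the product topology on $K_{\mathcal{A}}$ to the weak topology follows because any dual functional is a uniform limit of finitely-supported ones, each of which is a continuous cylinder function on $\{0,1\}^{X}$. Injectivity is immediate and compactness upgrades $\Phi$ to a homeomorphic embedding.

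\emph{Necessity $(\Rightarrow)$.} Here I would invoke Rosenthal's characterization (and its uniform analogue): a compact Hausdorff space is Eberlein, resp.\ uniformly Eberlein, iff it admits a $T_{0}$-separating family $\mathcal{U}=\bigcup_{n}\mathcal{U}_{n}$ of cozero sets with each $\mathcal{U}_{n}$ point-finite, resp.\ point-$k_{n}$-finite for some integer $k_{n}$. For $K_{\mathcal{A}}$ the natural separating family is the clopen family $V_{x}:=\{1_{A}\in K_{\mathcal{A}}:x\in A\}$, $x\in X$, and one has $\{x\in X_{i}:1_{A}\in V_{x}\}=A\cap X_{i}$; so the sought partition is equivalent to the family $\{V_{x}\}_{x\in X}$ itself being $\sigma$-point-finite, resp.\ $\sigma$-point-$N(i)$-finite. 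To extract such a partition from Rosenthal's abstract family, for each $x\in X$ I would choose some $U(x)\in\mathcal{U}$ witnessing the $T_{0}$-separation of $0$ from $1_{\{x\}}$, and set $X_{n}:=\{x\in X:U(x)\in\mathcal{U}_{n}\}$. The finiteness of $A\cap X_{n}$ for $A\in\mathcal{A}$ would then follow by contradiction: an infinite $A\cap X_{n}$, combined with condition (ii) of adequacy (which lets one take supremums of finite subfamilies within $\mathcal{A}$), should produce a point of $K_{\mathcal{A}}$ belonging to infinitely many $U(x)$ with $U(x)\in\mathcal{U}_{n}$, contradicting point-finiteness; the uniform case tracks the integer bound $k_{n}$ through the same argument to define $N(n)$.

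\emph{Main obstacle.} The technical heart is the last step of necessity: transferring the abstract $\sigma$-point-finiteness produced by Rosenthal to the concrete family $\{V_{x}\}_{x\in X}$. The choice of $U(x)$ must be made so that, from an infinite $A\cap X_{n}$, adequacy really does yield a common point: a compactness or Ramsey-style argument along finite subsets of $A$ (each of which is in $\mathcal{A}$ by condition (i)) seems essential, as does controlling the orientation of the $T_{0}$-separation (whether $1_{\{x\}}\in U(x)$ or $0\in U(x)$). This is exactly the place where the specific combinatorial structure of adequate families, rather than just the Eberlein property of $K_{\mathcal{A}}$, is used.
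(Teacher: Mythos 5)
The paper does not actually prove this theorem: it is imported from \cite{LeidSoko} and used as a black box, so there is no proof in the paper to measure yours against. Judged on its own, your sufficiency direction is correct and is the standard argument: the weighted map $\Phi$ lands in $c_0(X)$ (resp.\ in a norm-bounded subset of $\ell_2(X)$), it is injective and continuous for the pointwise topology, on bounded subsets of these spaces the weak topology coincides with the pointwise one because the finitely supported functionals are norm-dense in the dual, and compactness of $K_{\mathcal{A}}$ upgrades $\Phi$ to a homeomorphic embedding. That half needs no repair.

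The necessity direction has a genuine gap, and the specific recipe you propose for extracting the partition provably fails. Take $X=\omega$ and $\mathcal{A}=\mathcal{P}(\omega)$, so that $K_{\mathcal{A}}=\{0,1\}^{\omega}$ is metrizable and every open set is cozero. Let $\mathcal{U}_1=\{K_{\mathcal{A}}\setminus\{1_{\emptyset}\}\}$ and $\mathcal{U}_{n+2}=\{\{z\in K_{\mathcal{A}}:z(n)=1\}\}$ for $n\in\omega$. This is a $T_0$-separating family of cozero sets and each $\mathcal{U}_n$ consists of a single set, hence is trivially point-finite (even point-$1$-finite, so it also satisfies the uniform version). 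Your rule permits the choice $U(x)=K_{\mathcal{A}}\setminus\{1_{\emptyset}\}$ for every $x$, since this set does $T_0$-separate $1_{\emptyset}$ from $1_{\{x\}}$; then $X_1=\omega$, and $|A\cap X_1|=\aleph_0$ for every infinite $A\in\mathcal{A}$. So the datum ``which $\mathcal{U}_n$ contains a set separating $1_{\emptyset}$ from $1_{\{x\}}$'' is too weak to define the partition. Two further points make the suggested repair insufficient: first, $x\mapsto U(x)$ need not be injective, so infinitely many points of $A\cap X_n$ need not produce infinitely many distinct members of $\mathcal{U}_n$, and point-finiteness is a statement about distinct members; second, to make $x$ ``visible'' to the family at an arbitrary $1_A$ with $x\in A$ one must separate $1_A$ from $1_{A\setminus\{x\}}$, and the witnessing member of $\mathcal{U}$ varies with $A$ --- collapsing all these witnesses to a single index $n(x)$ that works uniformly in $A$ is precisely the combinatorial heart of the Leiderman--Sokolov proof, and your Ramsey-along-finite-subsets suggestion does not yet produce it. The same gap recurs, with the bound $N(n)$ to track, in part (2). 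As it stands the left-to-right implications of both parts are unproved.
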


\section{Spaces of decreasing functions on trees}\label{decfunctrees}

In this section we deal with set-theoretical trees, for this reason we fix the basic definitions and notations useful for the second part.\\ 
A \textit{tree} is a partially ordered set $(T,<)$ such that the set of predecessors $\{s\in T:s<t\}$ of any $t\in T$ is well-ordered by $<$. For any element $t\in T$, $\htte(t,T)$ denotes the order type of $\{s\in T:s<t\}$ and $\suc t=\{s\in T:s\geq t\}$ denotes the set of the successors of $t$. For any two elements $s,t\in T$, we say that $s,t$ are \textit{comparable} if $s<t \lor s>t \lor s=t$ holds. We say that $s,t\in T$ are \textit{incomparable}, and it is denoted by $s\perp t$, if they are not comparable. Given two subsets $A,B\subset T$ we say that $A,B$ are incomparable, denoted by $A\perp B$, if $s$ and $t$ are incomparable for any choice of $s\in A$ and $t\in B$. \\
For any ordinal $\alpha$, the set $\Lev_{\alpha}(T)=\{t\in T:\htte(t,T)=\alpha\}$ is called the $\alpha$th\textit{ level} of $T$. The height of $T$ is denoted by $\htte(T)$, and it is the least $\alpha$ such that $\Lev_{\alpha}(T)=\emptyset$. We denote by $T_{\beta}=\bigcup_{\gamma < \beta}\Lev_{\gamma}(T)$ and by $T_{>\beta}=\bigcup_{\gamma > \beta}\Lev_{\gamma}(T)$ for any ordinal $\beta$.\\
We say that a subset $C\subset T$ is a \textit{chain} if $C$ is totally ordered. A maximal chain is said \textit{branch}. A subset $A\subset T$ is said \textit{antichain} if $x\perp y$ for each $x,y\in A$. We recall that a tree $T$ of height $\omega_1$ is an \textit{Aronszajn tree} if all its levels and all its branches are at most countable,  furtermore if $T$ has no uncountable antichains $T$ is said \textit{Suslin tree}. An Aronszajn tree $T$ is said \textit{special} if it is the union of countable its antichains. It is a well-known fact that an uncountable subset of a Suslin tree is a Suslin tree with the inherited order. Moreover, we recall that if $T$ is a Sulslin tree and $T_1=\{t\in T: |\suc t|\leq\aleph_0\}$, we have that $T\setminus T_1$ is uncountable. In fact, the set of minimal elements of $T_1$ is an antichain of $T$, therefore $T_1$ can be written as a countable union of countable set.\\
Let $T$ be a tree, as in \cite{Ciesla} we consider the subspace $\X_T$ of the Cantor cube $\{0,1\}^{T\times T}$ defined as follows:
\begin{equation*}
\X_T=\{1_F\in \{0,1\}^{T\times T}: F\mbox{ is the graph of a strongly strictly decreasing function}\}.
\end{equation*}
Where by \textit{strongly strictly decreasing function} we mean a partial function $f:T\to T$ with totally ordered domain. We observe that $\X_T$ is an adequate compact and moreover, since the support of each element of $\X_T$ is finite, we have that the spaces $\X_T$ are Eberlein compacta, for any choice of $T$. We are going to provide a sufficient condition for $\X_T$ to be uniform Eberlein.
\begin{prop}
Let $T$ be a tree. Suppose that $T=\bigcup_{i\in\omega}A_i$, where $A_i$ are antichains. Then $\X_T$ is a uniform Eberlein space.
\end{prop}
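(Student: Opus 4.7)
The plan is to invoke Theorem~\ref{FamAdeguate}(2), the Leiderman--Sokolov characterization of uniform Eberlein adequate compacta, and exhibit the required countable partition of $T \times T$ together with a bound function $N$.

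First, I would refine the given cover $T = \bigcup_{i \in \omega} A_i$ into a partition by replacing each $A_i$ with $A_i \setminus \bigcup_{j<i} A_j$; these sets remain antichains (subsets of antichains are antichains). Then I would take as the required partition of $T \times T$ the countable family $\{A_i \times A_j : (i,j) \in \omega \times \omega\}$, which after reindexing by a bijection $\omega \times \omega \to \omega$ becomes a partition $T \times T = \bigcup_{n \in \omega} X_n$.

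The key observation, and essentially the whole content of the proof, is a uniform bound on $|\supp(1_F) \cap (A_i \times A_j)|$ for each graph $F$ of a strongly strictly decreasing function. Since $\supp(1_F) = F$ and $F$ is the graph of a partial function $f$ with totally ordered domain, we have
\begin{equation*}
F \cap (A_i \times A_j) = \{(s, f(s)) : s \in A_i \cap \dom(f),\ f(s) \in A_j\}.
\end{equation*}
Because $\dom(f)$ is a chain and $A_i$ is an antichain, $|A_i \cap \dom(f)| \leq 1$, whence $|F \cap (A_i \times A_j)| \leq 1$. Thus defining $N(n) = 2$ for every $n \in \omega$ satisfies the hypothesis of Theorem~\ref{FamAdeguate}(2).

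There is no real obstacle here: the argument is essentially a one-line verification once the Leiderman--Sokolov criterion and the antichain/chain duality are in place. The only mild care needed is to turn the cover into a genuine partition before taking products, but this is standard and does not affect the antichain property.
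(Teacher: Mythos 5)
Your proof is correct and follows essentially the same route as the paper: the key point in both is that the domain of a strongly strictly decreasing function is a chain, hence meets each antichain $A_i$ in at most one point, so the support meets each block of the partition at most once. The only cosmetic difference is that the paper partitions $T\times T$ into the coarser blocks $A_i\times T$ rather than your $A_i\times A_j$; both yield the bound $N\equiv 2$ required by Theorem~\ref{FamAdeguate}(2).
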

\begin{proof}
We are are going to use the Theorem \ref{FamAdeguate}. Let $T$ be a tree as in the hypothesis, we may suppose that $T=\bigcup_{i\in\omega} A_i$, where $A_i$ are disjoint antichains. For each $i\in \omega$, we define 
\begin{equation*}
X_i=\bigcup_{i\in\omega} A_i\times T.
\end{equation*}
Clearly the family $\{X_i\}_{i\in \omega}$ is a partition of $T\times T$ and for each $x\in \X_T$ we have $|\supp(x)\cap X_i|\leq1$. Therefore $\X_T$ is a uniform Eberlein compact space.
\end{proof}
In particular for $T$ special Aronszajn tree, $\X_T$ is uniform Eberlein. Recalling \cite[Theorem 5.2]{Ciesla} we have an easy way to show that if $T$ is a special Aronszajn tree, then $\X_T$ is bisequential. Unfortunately this argument seems hard to generalize to non-special Aronszajn trees. 
Thus in order to study the bisequentiality of this class of trees and to answer Question 6.2 of \cite{Ciesla}, in the next section we will work directly on bisequentiality, providing a complete characterization of those trees which have the property.\\
In a similar fashion we define another class of spaces of decreasing function on trees. Let $T$ be a tree and $f:T\to T$ be a partial function, we say that $f$ is \textit{strictly decreasing} if for every two comparable elements $s<t$ of the domain of $f$, we have $f(t)<f(s)$. We consider the subspace $\Y_T$ of the Cantor cube $\{0,1\}^{T\times T}$ defined as follows:
\begin{equation*}
\Y_T=\{1_F\in\{0,1\}^{T\times T}: F \mbox{ is a graph of a strictly decreasing function}\}.
\end{equation*}
Notice that, for each tree $T$, we have $\X_T\subset \Y_T$ and moreover $\X_T=\Y_T$ if and only if $T$ is a linear order. Also in this case it is easy to show that, for any tree $T$, the space $\Y_T$ is an adequate compact space. Moreover we observe that, given a tree $T$, the space $\Y_T$ is a Corson compact space if and only if every antichain of $T$ is countable.

\begin{prop}\label{PropEberYT}
Let $T$ be a tree, then $\Y_T$ is an Eberlein compact space if and only if $T=\bigcup_{i\in\omega} T_i$ such that $T_i$ has only finite antichains for each $i\in\omega$.
\end{prop}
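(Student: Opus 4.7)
The strategy is to invoke Theorem \ref{FamAdeguate}(1), since $\Y_T$ is an adequate compact (as already observed in the text), and to translate a partition of $T\times T$ with finite traces on supports into a countable cover of $T$ by sets with only finite antichains.

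For necessity $(\Rightarrow)$, assume $\Y_T$ is Eberlein and fix a partition $T\times T=\bigcup_{i\in\omega}X_i$ with $|\supp(1_F)\cap X_i|<\aleph_0$ for every $1_F\in\Y_T$. Let $T_i$ be the first-coordinate projection of $X_i$, so that $T=\bigcup_i T_i$. To see that each $T_i$ has only finite antichains, let $A\subseteq T_i$ be an antichain and, for each $a\in A$, choose $s_a\in T$ with $(a,s_a)\in X_i$. The partial function $a\mapsto s_a$ has antichain domain, so the strictly-decreasing condition is vacuous, and its graph $F_A$ yields an element of $\Y_T$. Then $|\supp(1_{F_A})\cap X_i|\geq|A|$, forcing $A$ to be finite.

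For sufficiency $(\Leftarrow)$, given $T=\bigcup_{i\in\omega}T_i$ with each $T_i$ having only finite antichains (which we may assume pairwise disjoint), form the countable partition $X_{i,j}=T_i\times T_j$ of $T\times T$. Fix $1_F\in\Y_T$ with $F$ the graph of a strictly decreasing $f$, and suppose for contradiction that some $F\cap X_{i,j}$ is infinite; then $S:=\{s\in T_i\cap\mathrm{dom}(f):f(s)\in T_j\}$ is infinite. By the combinatorial lemma sketched below, $S$ contains an infinite chain $s_1<s_2<\cdots$, and strict decreasingness of $f$ forces $f(s_1)>f(s_2)>\cdots$, an infinite descending chain in $T$, contradicting the well-ordering of the predecessors of any tree element. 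Theorem \ref{FamAdeguate}(1) then yields that $\Y_T$ is Eberlein.

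The key technical point, and the only nontrivial step, is the following combinatorial lemma: if $T$ is a tree and $T_i\subseteq T$ has only finite antichains, then any infinite $S\subseteq T_i$ contains an infinite chain. Assuming otherwise, the downward trace $\{s'\in S:s'<s\}$ is a finite chain for each $s\in S$, so the rank $d(s):=|\{s'\in S:s'<s\}|$ is in $\omega$. Each level $d^{-1}(n)$ is an antichain of $T_i$ (since $s<s'$ in $d^{-1}(n)$ would force $d(s')\geq n+1$), hence finite; thus $(S,<)$ is an infinite tree with finite levels, and K\"onig's lemma produces an infinite branch, contradicting the assumption. This lemma is where I expect the main difficulty to lie, although it is ultimately a routine application of K\"onig.
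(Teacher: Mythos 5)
Your argument is correct and follows essentially the same route as the paper's: both directions go through Theorem \ref{FamAdeguate}(1), with the first-coordinate projection giving necessity (via a function defined on an antichain, which is vacuously strictly decreasing) and a product partition giving sufficiency. The only difference is cosmetic: the paper uses the coarser partition $T_i\times T$ and appeals to the fact that a set with only finite chains and finite antichains is finite, whereas you use $T_i\times T_j$ and prove the equivalent combinatorial fact (an infinite subset of a tree with only finite antichains contains an infinite chain) explicitly via K\"onig's lemma, which is a correct and slightly more self-contained way to handle that step.
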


\begin{proof}
Assume that $\Y_T$ is an Eberlein compact space. Then there exists a countable partition $\{A_i\}_{i\in\omega}$ of $T\times T$ as in Theorem \ref{FamAdeguate}. Let us define, for each $i\in\omega$, $T_i=\pi_1 [A_i]$, where $\pi_1$ is the canonical projection onto the first coordinate. Then it is clear that each $T_i$ has only finite antichains. Indeed suppose by contradiction that there exists an infinite antichain $C\subset T_i$, hence it is possible to define a strictly decreasing function  $f:C\to T$ by $f(x)=y$ with $(x,y)\in A_i$. Denoting by $F$ the graph of $f$, we get $|\supp(1_F)\cap A_i|\geq\aleph_0$, which is a contradiction.\\
Conversely, assume that $T=\bigcup_{i\in\omega} T_i$ and each $T_i$ has finite antichains only. Without loss of generality, we may assume that the family $\{T_i\}_{i\in \omega}$ is pairwise disjoint. For each $i\in\omega$ consider $A_i=T_i\times T$. The family $\{A_i\}_{i\in\omega}$ is a partition of $T\times T$, which satisfies the requirement of Theorem \ref{FamAdeguate}. In fact, let $f:A\subset T_i\to T$ be a strictly decreasing function, we observe that $A$ is a subset with finite chains and antichains. Therefore $A$, and in particular the graph of $f$, is finite.
\end{proof}

Notice that the previous characterization can be rewritten as: $\Y_T$ is an Eberlein compact space if and only if $T=N\cup\bigcup_{i\in \alpha}C_i$, where $\alpha\leq \omega$, $C_i$ are uncountable chains and $N$ is a countable subset of $T$. In fact, suppose that $\Y_T$ is an Eberlein compact space. We may suppose that $T$ has uncountable size, otherwise the assertion would be clear. By Proposition \ref{PropEberYT}, we have $T=\bigcup_{i\in\omega}T_i$, where $T_i$ has only finite antichains. It is a well-known fact that an uncountable tree with all finite levels, has an uncountable branch. Therefore, suppose that some uncountable $T_i$ contains infinitely many uncountable branches, then, since $T_i$ has uncountable height, we would have an infinite level in $T_i$. That is a contradiction. It follows that $T_i$ has finitely many uncountable branches, which gives the assertion. The other implication easily follows by Proposition \ref{PropEberYT}.\\
Let $T$ be a Suslin tree, from the previous observation it follows that the space $\Y_T$ is not Eberlein, Moreover since each antichain of $T$ is at most countable, we have that $\Y_T$ is a Corson compact space. We refer to \cite{Negre} for other examples of Corson compact spaces that are not Eberlein.\\
In section \ref{SectionYT} we will provide a full characterization of uniform Eberlein $\Y_T$ spaces.

\section{Bisequentiality on $\X_T$ spaces}

The aim of this section is to characterize the spaces $\X_T$ in terms of bisequentiality. Let us start by stating two necessary conditions for $\X_T$ spaces to be bisequential.\\   
Since bisequentiality is preserved passing to subspaces (see \cite[Proposition 3.D.3]{Michael}), it follows that by Cie\'{s}la's result \cite[Theorem 1.2]{Ciesla}, $\X_T$ cannot contain $\X_{\omega_1}$ as a subspace, therefore $T$ must have no uncountable branches.
Moreover, we notice that in \cite[Example 10.15]{Michael} it was proved that the one-point compactification of a discrete set $\Gamma$ is bisequential if and only if the cardinality of $\Gamma$ is less than the first measurable cardinal. We recall that a cardinal $\kappa$ is said \textit{measurable} if there exists a $\kappa$-complete nonprincipal ultrafilter on $\kappa$.  It is easy to show that if $S$ is an antichain of $T$, then $\X_T$ contains a copy of the one-point compactification of a discrete set of size $|S|$. Therefore, if the cardinality of $S$ is greater or equal to the first measurable cardinal, the space $\X_T$ is not bisequential.\\ 
The idea behind the proof of \cite[Theorem 1.2]{Ciesla} is to show that the existence of a particular $\sigma$-ideal implies the non-bisequentiality of the space $\X_{\omega_1}$ at the empty  function. Since not being bisequential at some point implies not being bisequential at all, the result follows.\\
In the first part of this section we generalize this argument. First we show that by weakening the requirement of being a $\sigma$-ideal, one can prove that the bisequentiality of a topological space at a point is actually equivalent to the existence of an ideal (with certain properties) on a local subbase of the point. 
Secondly, we show that every space $\X_T$ is bisequential if and only if it is bisequential at the empty function.
In the final part of the section, combining this two results we show that the two necessary conditions stated above are also sufficient for the bisequentiality, that is:

\begin{thm}\label{ThmCarattXT}
Let $T$ be a tree. $\X_T$ is a bisequential space if and only if $T$ satisfies the following conditions:
\begin{itemize}
\item $T$ has size less than the first measurable cardinal.
\item $T$ has no uncountable branches.
\end{itemize}
\end{thm}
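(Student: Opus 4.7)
The plan is to prove the two directions separately, with the necessity being short and leveraging results already flagged in the text, while the sufficiency is the main content and combines the two tools developed earlier in this section: the ideal characterization of pointwise bisequentiality, and the reduction to the empty function.

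For the necessity, both failure modes have already been discussed. If $T$ contains an uncountable branch $B$, the obvious restriction map embeds a copy of $\X_{\omega_1}$ as a closed subspace of $\X_T$, and hereditariness of bisequentiality together with Cie\'sla's theorem forces $\X_T$ to fail. For the cardinality bound, if $T$ has no uncountable branch then $T=\bigcup_{\alpha<\omega_1}\Lev_\alpha(T)$; the first measurable cardinal $\kappa$ is inaccessible, hence regular and strictly greater than $\omega_1$, so the assumption $|T|\geq\kappa$ forces some level $\Lev_\alpha(T)$ to have size at least $\kappa$ by a standard cardinal-arithmetic computation. That level is an antichain, so $\X_T$ contains the one-point compactification of a discrete set of size $\kappa$, which is not bisequential by the result of Michael recalled above. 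Thus no uncountable branches together with $|T|<\kappa$ are both necessary.

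For sufficiency, assume $T$ has no uncountable branches and $|T|<\kappa$. By the reduction lemma proved earlier in this section, it suffices to show $\X_T$ is bisequential at the empty function $1_\emptyset$. I then invoke the ideal characterization of pointwise bisequentiality, also proved earlier: it is enough to produce an ideal $\mathcal{J}$ on a local subbase $\mathcal{S}$ of $1_\emptyset$ with the required properties. A natural choice is $\mathcal{S}=\{U_{(s,t)}:s,t\in T,\,t<s\}$ where $U_{(s,t)}=\{1_F\in\X_T:(s,t)\notin F\}$, so that $\mathcal{S}$ may be identified with $\{(s,t)\in T\times T:t<s\}$, a set of cardinality at most $|T|^2<\kappa$.

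The key step is to define $\mathcal{J}$ directly from the tree structure of $T$: roughly, a set $A\subseteq\mathcal{S}$ belongs to $\mathcal{J}$ when its first-coordinate projections, organized along the tree, are ``branchwise small'' in a sense dictated by the ideal characterization. The no-uncountable-branch hypothesis enters through a pressing-down argument: every strongly strictly decreasing function on a chain of $T$ has countable domain, so ``regressive'' selections along branches stabilize countably. The bound $|T|<\kappa$ enters because the ideal property reduces to the nonexistence of a certain $\omega_1$-complete non-principal ultrafilter on the subbase, and no such ultrafilter lives on a set of size less than the first measurable cardinal.

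The main obstacle is to make the definition of $\mathcal{J}$ match the precise technical property extracted from the characterization theorem, and then to verify that arbitrary ultrafilters on $\X_T$ converging to $1_\emptyset$ do extract, via the subbase $\mathcal{S}$, a member of $\mathcal{J}$. This is where the two hypotheses must interact carefully: the tree pressing-down gives countability along each branch, while the cardinality bound on $T$ upgrades countability along branches to a global countable witness. Once the ideal is correctly identified the remaining verification is a bookkeeping exercise combining Fodor-type arguments on $T$ with the elementary fact that below the first measurable cardinal every $\omega_1$-complete ultrafilter is principal.
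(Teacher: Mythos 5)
Your necessity argument is fine and matches the paper's: an uncountable branch yields a copy of $\X_{\omega_1}$, and (using regularity of the first measurable cardinal $\kappa$ and the fact that no uncountable branches forces height $\le\omega_1$) a tree of size $\ge\kappa$ has a level, hence an antichain, of size $\ge\kappa$, giving a non-bisequential one-point compactification inside $\X_T$. The sufficiency direction, however, has a genuine gap, and in fact the logic of the main tool is inverted. Theorem \ref{ThmCarattBiseq} states that $X$ is \emph{not} bisequential at $x$ if and only if there \emph{exists} an ideal on the subbase satisfying $(i)$--$(iii)$. So to prove that $\X_T$ \emph{is} bisequential at $1_O$ you must show that \emph{no} such ideal exists, i.e.\ that \emph{every} ideal $\I$ on $T\times T$ satisfying $(i)$ and $(ii)$ necessarily fails $(iii)$ by admitting a countable cover of $T\times T$. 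Your plan to ``produce an ideal $\mathcal{J}$ with the required properties'' would, if carried out, prove non-bisequentiality instead.

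Beyond this inversion, the combinatorial core of the argument is absent. The paper's proof takes an arbitrary ideal $\I$ satisfying $(i)$ and $(ii)$ and exploits two facts. First, for incomparable $A,B\subset T$ the sets $\X_T\setminus\bigcap_{(r,s)\in A\times T}V_{(r,s)}$ and $\X_T\setminus\bigcap_{(r,s)\in B\times T}V_{(r,s)}$ are disjoint (domains are chains), so the finite intersection property forces at least one of $A\times T$, $B\times T$ into $\I$; consequently $Z_1=\{t\in T: (\suc t)\times T\notin\I\}$ is a downward-closed chain, hence countable precisely because $T$ has no uncountable branches. Second, the induced family on a level $\Lev_\beta(T)$ above $Z_1$ is a \emph{maximal} nonprincipal ideal (maximality again comes from the incomparability observation, since a subset of a level and its complement in that level are incomparable), so its dual is a nonprincipal ultrafilter on a set of size less than $\kappa$, which cannot be $\sigma$-complete; this yields a countable cover of the level, hence of $(T\setminus T_\beta)\times T$, by members of $\I$. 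Repeating in the second coordinate and using $\Fin(T\times T)\subset\I$ on the countable rectangle $Z_1\times Z_2$ kills condition $(iii)$. Your sketch replaces all of this with ``branchwise small,'' a pressing-down/Fodor argument that is neither needed nor applicable here, and an appeal to $\omega_1$-complete ultrafilters that cannot be made directly because the ideal $\I$ on $T\times T$ is not maximal a priori: the reduction to an honest ultrafilter on a small set is exactly the step that requires the chain/level analysis you omit. As written, the sufficiency direction is a plan with its central construction missing and its framing reversed.
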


Notice that since every measurable cardinal is regular, 
we have that each tree $T$ of height less than or equal to $\omega_1$ has cardinality strictly less than the first measurable cardinal if and only if each level of $T$ has cardinality strictly less than the first measurable cardinal.\\
We recall that a family $\F$ of sets has the \textit{finite intersection property} if every finite $\mathcal{G}=\{X_1,...,X_n\}\subset \F$ has nonempty intersection $X_1\cap\dots \cap X_n\neq\emptyset$.

\begin{thm}\label{ThmCarattBiseq}
Let $X$ be a topological space. Let $\{V_{\alpha}\}_{\alpha\in A}$ be a local subbase for a point $x\in X$. Then $X$ is not bisequential at $x$ if and only if there exists an ideal $\I$ on $A$ satisfying the following conditions:
\begin{enumerate}[$(i)$]
\item $\{\alpha\}\in \I$ for every $\alpha\in A$;
\item the family $\mathcal{F_{\I}}=\{\bigcap_{\alpha\in S}V_{\alpha}: S\in \I\}\cup \{X\setminus \bigcap_{\alpha\in S}V_{\alpha} : S\in \mathcal{P}(A)\setminus \I\} $ has the finite intersection property;
\item if $\{S_i\}_{i\in\omega}\subset\I$, then $\bigcup_{i\in\omega}S_i\neq A$.
\end{enumerate}
\end{thm}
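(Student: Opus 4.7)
The plan is to prove both directions by translating between the ultrafilter witnessing non-bisequentiality and the ideal $\I$ via the correspondence $S \leftrightarrow \bigcap_{\alpha\in S} V_\alpha$. The role of the subbase is that finite intersections of the $V_\alpha$ form a local base at $x$; I will use this in both directions to move between neighborhoods of $x$ and finite subsets of $A$.

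For the ``$\Rightarrow$'' direction, I assume there is an ultrafilter $\U$ converging to $x$ with no decreasing sequence converging to $x$, and I define
\[
\I=\Bigl\{S\subset A:\bigcap_{\alpha\in S}V_\alpha\in\U\Bigr\}.
\]
Closure under subsets and finite unions is immediate from $\U$ being a filter. Condition $(i)$ follows because each $V_\alpha$ is a neighborhood of $x$, hence lies in $\U$. For $(ii)$, note that if $S\in\I$ then $\bigcap_{\alpha\in S}V_\alpha\in\U$, while if $T\notin\I$ then $X\setminus\bigcap_{\alpha\in T}V_\alpha\in\U$ by maximality; thus every finite subfamily of $\mathcal F_\I$ lies in $\U$ and hence has nonempty intersection. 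For $(iii)$, if $\{S_i\}_{i\in\omega}\subset\I$ with $\bigcup S_i=A$, set $W_n=\bigcap_{\alpha\in S_0\cup\dots\cup S_n}V_\alpha\in\U$; this is a decreasing sequence, and since any finite intersection $V_{\alpha_1}\cap\dots\cap V_{\alpha_k}$ of subbase elements is eventually contained in $W_n$ (because each $\alpha_j$ lies in some $S_{i_j}$), the sequence $\{W_n\}$ converges to $x$, contradicting the choice of $\U$.

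For the ``$\Leftarrow$'' direction, I use $(ii)$ to extend $\mathcal F_\I$ to an ultrafilter $\U$. I claim $\U$ converges to $x$: any neighborhood $V$ of $x$ contains a finite intersection $V_{\alpha_1}\cap\dots\cap V_{\alpha_k}$, and $\{\alpha_1,\dots,\alpha_k\}\in\I$ by $(i)$ and closure under finite unions, so this intersection lies in $\mathcal F_\I\subset\U$. Now suppose for contradiction that $\{U_n\}_{n\in\omega}\subset\U$ is a decreasing sequence converging to $x$. For each $n$ define
\[
T_n=\{\alpha\in A:U_n\subset V_\alpha\}.
\]
Since $U_n\subset \bigcap_{\alpha\in T_n}V_\alpha$ and $U_n\in\U$, this intersection lies in $\U$; were $T_n\notin\I$, its complement would lie in $\mathcal F_\I\subset\U$, a contradiction. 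Hence $T_n\in\I$. But convergence of $\{U_n\}$ to $x$ together with each $V_\alpha$ being a neighborhood of $x$ gives, for each $\alpha$, some $n$ with $U_n\subset V_\alpha$, so $\bigcup_n T_n=A$, contradicting $(iii)$.

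The bookkeeping is essentially automatic once the correspondence $S\leftrightarrow\bigcap_{\alpha\in S}V_\alpha$ is set up; the main conceptual step, and the point where the ``subbase'' (as opposed to ``base'') hypothesis is used, is the argument that $\{W_n\}$ converges to $x$ in the $\Rightarrow$ direction. I expect this passage between finite subsets of $A$ and local neighborhoods of $x$ to be the part most worth writing carefully, since it is where the definition of $\mathcal F_\I$ meets the topological convergence condition.
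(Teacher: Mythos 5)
Your proposal is correct and follows essentially the same route as the paper: the same ideal $\I=\{S\subset A:\bigcap_{\alpha\in S}V_\alpha\in\U\}$ in the forward direction, the same extension of $\F_{\I}$ to an ultrafilter and the same sets $T_n=\{\alpha\in A: U_n\subset V_\alpha\}$ (the paper's $A_i$) in the converse. The only difference is that you spell out the subbase-to-neighborhood bookkeeping (why $\{W_n\}$ converges to $x$ and why $\U$ converges to $x$) slightly more explicitly than the paper does, which is fine.
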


\begin{proof}
$(\Rightarrow)$ Suppose that $X$ is not bisequential at $x$, therefore there exists an ultrafilter $\U$ converging to $x$ such that every countable decreasing family $\{U_i\}_{i\in\omega}\subset \U$ does not converge to $x$.\\
Let us define a family of subsets of $A$ as $\I=\{S\subset A: \bigcap_{\alpha\in S}V_{\alpha}\in \U\}$. Since $\U$ is an ultrafilter on $X$, it is easy to prove that $\I$ is an ideal on $A$. First, since $\U$ is non-principal, then $A\notin \I$. Let $S_1,S_2\in \I$, then 
\begin{equation*}
\bigcap_{\alpha\in S_1\cup S_2}V_{\alpha}=(\bigcap_{\alpha\in S_1}V_{\alpha})\cap (\bigcap_{\beta\in S_2}V_{\beta})\in \U.
\end{equation*}
Hence $S_1\cup S_2\in \I$. Further, if $S_1\subset S_2$ and $S_2\in\I$, we have $\bigcap_{\alpha\in S_2}V_{\alpha}\subset \bigcap_{\alpha\in S_1}V_{\alpha}$, hence $S_1\in \I$. It remains to prove that $\I$ satisfies $(i)-(iii)$:
\begin{enumerate}[$(i)$]
\item since $\U$ converges to $x$, we have $V_{\alpha}\in \U$ for every $\alpha\in A$;
\item $\mathcal{F}_{\I}$ has the finite intersection property, since it is extended by $\U$;
\item suppose by contradiction that there exists a countable family $\{S_i\}_{i\in\omega}\subset \I$, such that $\bigcup_{i\in\omega}S_i=A$. Let $U_i=\bigcap_{j\leq i}\bigcap_{\alpha\in S_{j}}V_{\alpha}$, since $S_1\cup\dots \cup S_i\in \I$ we have $U_i\in \U$ for every $i\in \omega$. The family $\{U_i\}_{i\in\omega}\subset \U$ is a countable decreasing family that converges to $x$. That is clearly a contradiction.
\end{enumerate}
$(\Leftarrow)$ Since $\mathcal{F}_{\I}$ has the finite intersection property it can be extended to an ultrafilter $\U$ on $X$. Since $\{\alpha\}\in\I$ for every $\alpha\in A$, the ultrafilter $\U$ converges to $x$.\\
Suppose by contradiction that there exists a countable decreasing family $\{U_i\}_{i\in\omega}\subset \U$ convergent to $x$. Let us define
\begin{equation*}
A_i=\{\alpha\in A: U_i\subset V_{\alpha}\},
\end{equation*}
for every $i\in\omega$. It follows that $U_i\subset \bigcap_{\alpha\in A_{i}}V_{\alpha}\in\U$. Therefore, since $\U$ is an ultrafilter, we have $A_i\in \I$, otherwise we would have both $\bigcap_{\alpha\in A_i}V_{\alpha}\in \U$ and $(X\setminus\bigcap_{\alpha\in A_i}V_{\alpha})\in\U$. Since $\{U_{i}\}_{i\in\omega}$ converges to $x$, for every $\alpha\in A$ there exists $i\in\omega$ such that $U_i\subset V_{\alpha}$. Hence $\bigcup_{i\in\omega}A_i=A$, which contradicts the third property of $\I$.
\end{proof}

 We will use $O$ to denote the graph of the empty function. Given $1_F\in \X_T$, we set 
\begin{equation*}
\begin{split}
W_F&=\{1_G\in \X_{T}: F\subset G\},\\
W^F&=\{1_G\in \X_T: G\cap F=\emptyset,\,  1_{F\cup G}\in \X_T \}.
\end{split}
\end{equation*}
We observe that
\begin{equation*}
W_F=\bigcap_{(s,t)\in F}\{1_G\in\X_T: 1_G(s,t)=1\},
\end{equation*}
since $F$ is a finite subset of $T\times T$, we have that $W_F$ is a clopen subset of $\X_T$.

\begin{prop}\label{LemEmptyFunction}
$\X_T$ is not a bisequential space if and only if $\X_T$ is not bisequential at $1_{O}$.
\end{prop}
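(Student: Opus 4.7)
The right-to-left direction is trivial, since bisequentiality at every point subsumes bisequentiality at $1_O$. For the non-trivial direction, I will suppose $\X_T$ is not bisequential at some point $1_F$ and conclude that $\X_T$ also fails to be bisequential at $1_O$.

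The plan rests on two standard properties of bisequentiality at a point, both of which I will invoke without reproving. It is a local property: $X$ is bisequential at $x$ if and only if some (equivalently, every) open neighborhood $U$ of $x$ is bisequential at $x$ as a subspace (the non-trivial direction uses the trace $\{A\cap U:A\in\U\}$ of an ultrafilter $\U\to x$, which is an ultrafilter on $U$ because $U\in\U$). It is also inherited by subspaces containing the point. Both facts can be found in \cite{Michael}.

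The key geometric ingredient will be the map $\psi\colon W_F\to W^F$ given by $\psi(1_G)=1_{G\setminus F}$, with candidate inverse $1_H\mapsto 1_{F\cup H}$. I will first check that these are well defined, i.e.\ that $G\setminus F$ and $F\cup H$ are graphs of strongly strictly decreasing functions whenever $G\in W_F$ and $H\in W^F$ (the first is a restriction, the second is built into the definition of $W^F$); the check uses that $F$ is finite, since descending chains in trees are finite. Continuity of both maps is immediate coordinate-wise in $\{0,1\}^{T\times T}$: each output coordinate is either a constant or a single coordinate of the input. Finally, $W_F$, being the intersection of finitely many subbasic clopen sets, is clopen in $\X_T$ and therefore compact, so the continuous bijection $\psi$ between compact Hausdorff spaces is a homeomorphism, and plainly $\psi(1_F)=1_O$.

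Chaining the equivalences then completes the argument: $\X_T$ is bisequential at $1_F$ iff the open neighborhood $W_F$ is bisequential at $1_F$ (local property); iff $W^F$ is bisequential at $1_O$ (via $\psi$); and the latter is implied by bisequentiality of $\X_T$ at $1_O$ (hereditariness, using $1_O\in W^F\subseteq \X_T$). Taking contrapositives yields the desired implication. The only step requiring real care is the construction and well-definedness of the homeomorphism $\psi$; once that is in place, everything else is a formality about how bisequentiality at a point behaves under open neighborhoods and subspaces.
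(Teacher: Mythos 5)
Your proposal is correct and takes essentially the same route as the paper: the identical map $1_G\mapsto 1_{G\setminus F}$ from $W_F$ onto $W^F$, shown to be a homeomorphism via compactness of the clopen set $W_F$, combined with the same three facts the paper isolates as lemmas (bisequentiality at a point passes to and from open neighborhoods, is destroyed by non-bisequential subspaces containing the point, and is preserved by homeomorphisms). There is no substantive difference in approach.
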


We need first a couple of lemmata. For sake of completeness we provide also the proofs of the following well-known results.

\begin{lem}\label{UltSub}
Let $X$ be a set and $\U$ be an ultrafilter on it. Suppose that $Y\in \U$, let
\begin{equation*}
\U_{Y}=\{U\cap Y:U\in \mathcal{U}\},
\end{equation*}
then $\U_Y$ is an ultrafilter on $Y$ contained in $\U$.
\end{lem}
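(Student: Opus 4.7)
The plan is to verify the three required properties in turn: that $\U_Y$ is a filter on $Y$, that it is an ultrafilter, and that $\U_Y\subset \U$. Since this is a standard fact, the main work is just unpacking the definitions; there is no real obstacle beyond keeping careful track of which ambient set we are working in.

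First I would check that $\U_Y$ is a filter on $Y$. It is non-empty since $Y=X\cap Y\in \U_Y$. If $U\cap Y\in \U_Y$ with $U\in\U$, then since $Y\in\U$ we have $U\cap Y\in \U$, so in particular $U\cap Y\neq\emptyset$, meaning $\emptyset\notin\U_Y$. Stability under finite intersection is immediate from $(U_1\cap Y)\cap(U_2\cap Y)=(U_1\cap U_2)\cap Y$ together with $U_1\cap U_2\in\U$. For upward closure in $Y$, if $U\cap Y\subset V\subset Y$ with $U\in\U$, then $U\subset V\cup(X\setminus Y)$, and since $Y\in \U$ we obtain $V\in\U$, so $V=V\cap Y\in \U_Y$.

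Next I would verify the ultrafilter property, which is the only place where we must exploit that $\U$ is an ultrafilter on $X$ (not just a filter) and that $Y\in \U$. Let $A\subset Y$. Since $\U$ is an ultrafilter on $X$, either $A\in\U$ or $X\setminus A\in\U$. In the first case $A=A\cap Y\in \U_Y$. In the second case $(X\setminus A)\cap Y=Y\setminus A\in\U_Y$. Hence exactly one of $A$ and $Y\setminus A$ belongs to $\U_Y$, as required.

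Finally, the inclusion $\U_Y\subset\U$ is the easiest part: for $U\in\U$ we have $U\cap Y\in\U$ because $Y\in\U$ and $\U$ is closed under finite intersections. Putting these pieces together yields the lemma. The one point worth flagging is that the hypothesis $Y\in\U$ is used essentially both in ruling out $\emptyset\in\U_Y$ and in transferring the ultrafilter dichotomy from $X$ to $Y$.
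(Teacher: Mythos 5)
Your proof is correct and follows essentially the same route as the paper's: verify the filter axioms, transfer the ultrafilter dichotomy from $X$ to $Y$ using $Y\in\U$, and note the inclusion $\U_Y\subset\U$. The only cosmetic difference is that you apply the dichotomy to $A$ directly rather than to $A\cup(X\setminus Y)$ as the paper does, and you spell out the filter axioms more fully; both come to the same thing.
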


\begin{proof}
Since $Y\in \U$, we have $U\cap Y\in \U$ for every $U\in \U$. Hence $\U_Y\subset \U$. We observe that $\emptyset \notin \U_{Y}$. We want to show that for every $A\subset Y$, we have either $A\in \mathcal{U}_{Y}$ or $Y\setminus A\in \U_{Y}$. We consider $A\cup (X\setminus Y)$, since $\U$ is an ultrafilter on $X$ we have either $A\cup (X\setminus Y)\in \U$ or $Y\setminus A \in \U$. In the first case we have $\U_Y\ni Y\cap (A\cup (X\setminus Y))=A$, in the second case $Y\setminus A\in \U_Y $. This concludes the proof.
\end{proof}

It is well-known that subspaces of a bisequential space are bisequential \cite[Proposition 3.D.3]{Michael}. In fact can be said something stronger.

\begin{lem}\label{PropSubspace}
Let $X$ be a topological space and $Y$ be a subspace of $X$, and let $y\in Y$.
\begin{enumerate}[$(a)$]
\item If $Y$ is not bisequential at $y$, then $X$ is not bisequential at $y$.
\item If $Y$ is an open subspace of $X$, then $Y$ is bisequential at $y$ if and only if $X$ is bisequential at $y$.
\end{enumerate}
\end{lem}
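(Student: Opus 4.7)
For part $(a)$, the plan is to lift a witnessing ultrafilter from $Y$ to $X$. Suppose $\mathcal{V}$ is an ultrafilter on $Y$ converging to $y$ with no decreasing sequence in $\mathcal{V}$ converging to $y$. Viewing its members as subsets of $X$, $\mathcal{V}$ has the finite intersection property, so by Zorn's Lemma it extends to an ultrafilter $\mathcal{U}$ on $X$. Since $Y\in\mathcal{V}\subset\mathcal{U}$, Lemma \ref{UltSub} gives $\mathcal{U}_Y=\mathcal{V}$. Every neighborhood $W$ of $y$ in $X$ restricts to a neighborhood $W\cap Y$ of $y$ in $Y$, hence $W\cap Y\in\mathcal{V}\subset\mathcal{U}$, so $W\in\mathcal{U}$ and $\mathcal{U}$ converges to $y$. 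Finally, any decreasing sequence $\{U_n\}\subset\mathcal{U}$ converging to $y$ in $X$ would produce the decreasing sequence $\{U_n\cap Y\}\subset\mathcal{V}$ converging to $y$ in $Y$ (since the $Y$-neighborhoods of $y$ are exactly the traces of $X$-neighborhoods), contradicting the choice of $\mathcal{V}$.

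For part $(b)$, the implication ``$X$ bisequential at $y$ $\Rightarrow$ $Y$ bisequential at $y$'' is the contrapositive of $(a)$ and does not use openness. For the converse, assume $Y$ is open and bisequential at $y$, and let $\mathcal{U}$ be any ultrafilter on $X$ converging to $y$. Openness of $Y$ ensures $Y\in\mathcal{U}$, so Lemma \ref{UltSub} applies and $\mathcal{U}_Y$ is an ultrafilter on $Y$. It converges to $y$ in $Y$: any $Y$-neighborhood has the form $W\cap Y$ with $W$ a neighborhood of $y$ in $X$, and $W\in\mathcal{U}$ yields $W\cap Y\in\mathcal{U}_Y$. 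By bisequentiality of $Y$ at $y$, pick a decreasing sequence $\{V_n\}\subset\mathcal{U}_Y$ converging to $y$ in $Y$; viewing each $V_n\subset Y$ as a subset of $X$ puts it in $\mathcal{U}$ (again by Lemma \ref{UltSub}), and any neighborhood $W$ of $y$ in $X$ eventually contains $V_n$ because $W\cap Y$ does, so $\{V_n\}$ witnesses bisequentiality of $X$ at $y$.

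The whole argument is bookkeeping around Lemma \ref{UltSub}, with no genuine obstacle. The only point that deserves attention is why openness of $Y$ is used in exactly one direction of $(b)$: in this direction we need $Y\in\mathcal{U}$ for \emph{every} ultrafilter $\mathcal{U}$ on $X$ converging to $y$, which is guaranteed precisely when $Y$ is a neighborhood of $y$; in part $(a)$, by contrast, we are building $\mathcal{U}$ ourselves and arrange $Y\in\mathcal{U}$ by construction.
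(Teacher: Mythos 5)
Your proof is correct and follows essentially the same route as the paper's: for $(a)$ you extend the witnessing ultrafilter on $Y$ to one on $X$ and pull decreasing sequences back via their traces on $Y$, and for $(b)$ you use openness to get $Y\in\mathcal{U}$ and restrict via Lemma \ref{UltSub}. The only cosmetic difference is that the paper argues the nontrivial direction of $(b)$ contrapositively, while you argue it directly; the mechanics are identical.
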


\begin{proof}
$(a)$ Let $\U_{Y}$ be an ultrafilter on $Y$ converging to $y\in Y$, such that every countable decreasing subfamily $\{U_{i}\}_{i\in \omega}\subset \U_{Y}$ does not converge to $y$.\\
Since $Y\subset X$ and $\U_{Y}$ has the finite intersection property, $\U_{Y}$ can be extended to an ultrafilter $\tilde{\U}_{Y}$ on $X$. Let $V_{y}$ be a neighborhood of $y$, then $V_{y}\cap Y\in \U_{Y}$, hence $V_{y}\in \tilde{\U}_{Y}$. Therefore $\tilde{\U}_{Y}$ converges to $y$ in $X$.\\
Suppose by contradiction that there exists a countable decreasing family $\{U_i\}_{i\in\omega}\subset \tilde{\U}_{Y}$ convergent to $y$ in $X$. Since for every $i\in\omega$ we have $Y\cap U_i\in \U_{Y}$ (otherwise we would have $Y\setminus U_{i}\in \U_Y \subset \tilde{\U}_{Y}$ and $U_i \in \tilde{\U}_{Y}$, but $(Y\setminus U_{i})\cap U_i=\emptyset$) and for every neighborhood $V_y$ of $y$ we have $Y\cap U_i\subset Y\cap V_y$ eventually, we obtain a contradiction.\\
$(b)$ It is enough to show that if $X$ is not bisequential at $y$, then $Y$ is not bisequential at $y$. Thus let us assume that there exists an ultrafilter $\U$ on $X$ convergent to $y$ such that every countable decreasing chain $\{U_i\}_{i\in\omega}\subset \U$ is not convergent to $y$. Since $Y$ is an open subset of $X$ and $y\in Y$, we have $Y\in\U$. We apply Lemma $\ref{UltSub}$ getting an ultrafilter $\U_Y$ on $Y$, defined by $\U_Y=\{U\cap Y:U\in \mathcal{U}\}\subset \U$. Since every open set in $Y$ is open in $X$, the ultrafilter $\U_Y$ converges to $y$. Moreover, since every countable chain in $\U_Y$ belongs to $\U$, it follows that $Y$ is not bisequential at $y$.
\end{proof}

\begin{lem}\label{PropHomeo}
Let $X$, $Y$ be topological spaces, and suppose that $\varphi:X\to Y$ is a homeomorphism between them. Then $X$ is bisequential at $x$ if and only if $Y$ is bisequential at $\varphi(x)$.
\end{lem}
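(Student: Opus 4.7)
The plan is to prove the lemma by transferring ultrafilters along the homeomorphism. By the symmetry of the statement (the inverse $\varphi^{-1}$ is also a homeomorphism), it suffices to show one implication: if $X$ is bisequential at $x$, then $Y$ is bisequential at $\varphi(x)$.

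First I would start with an ultrafilter $\mathcal{V}$ on $Y$ converging to $\varphi(x)$, and define $\mathcal{U}=\{\varphi^{-1}(V):V\in\mathcal{V}\}$ on $X$. Since $\varphi$ is a bijection, preimage commutes with complements, finite intersections, and supersets, so $\mathcal{U}$ is an ultrafilter on $X$. Since $\varphi$ is a homeomorphism, $\varphi$ induces a bijection between the neighborhood filters of $x$ and $\varphi(x)$; this makes it immediate that $\mathcal{U}$ converges to $x$ in $X$.

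Then I would invoke the hypothesis that $X$ is bisequential at $x$ to obtain a decreasing sequence $\{U_n\}_{n\in\omega}\subset\mathcal{U}$ converging to $x$. Setting $V_n=\varphi(U_n)$ yields a decreasing sequence in $\mathcal{V}$, and the homeomorphism property once more ensures that $\{V_n\}_{n\in\omega}$ converges to $\varphi(x)$ in $Y$. This witnesses the bisequentiality of $Y$ at $\varphi(x)$.

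There is no real obstacle here: the entire content is the functorial observation that ultrafilters, convergence, and decreasing sequences are all preserved by a homeomorphism (equivalently, by any topological isomorphism of the underlying structures). The only thing to be careful about is that one works with preimages (which automatically preserves the Boolean structure of ultrafilters) in one direction and images under the homeomorphism (which works because $\varphi$ is a bijection) in the other.
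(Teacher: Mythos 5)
Your proof is correct and is essentially the same argument as the paper's: both transfer ultrafilters and decreasing sequences across the bijection $\varphi$, using that a homeomorphism matches neighborhoods of $x$ with neighborhoods of $\varphi(x)$. The only cosmetic difference is that the paper phrases it contrapositively (pushing forward a ``bad'' ultrafilter witnessing non-bisequentiality), while you argue directly by pulling back an arbitrary convergent ultrafilter and pushing the resulting decreasing sequence forward.
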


\begin{proof}
Let $\U$ be an ultrafilter on $X$ convergent to $x$, such that every countable decreasing subfamily $\{U_{i}\}_{i\in \omega}\subset \U$ does not converge to $x$. Define $\U_Y=\varphi(\U)=\{\varphi(U): U\in \U\}$. This is an ultrafilter on $Y$, convergent to $\varphi(x)$ by definition of homeomorphism. If $\{U_i\}_{i\in\omega}$ is a decreasing chain in $\U_Y$, then $\{\varphi^{-1}(U_i)\}_{i\in\omega}$ is a decreasing chain in $\U$ and by assumption there exists $V\subset X$ open neighborhood of $x$ such that $\varphi^{-1}(U_i)\nsubseteq V$ for every $i$, or equivalently, $U_i\nsubseteq \varphi(V)$ for every $i$. But $\varphi(V)$ is open in $Y$. This proves that $Y$ is not bisequential at $\varphi(x)$.\\
The converse follows combining the previous argument with the fact that $\varphi^{-1}:Y\to X$ is a homeomorphism too.
\end{proof}

\begin{proof}[Proof of Proposition \ref{LemEmptyFunction}]
The "only if part" is trivial. Thus let us assume that $\X_T$ is not bisequential, in particular $\X_T$ is not bisequential at some point $1_F\in \X_T$. 

Consider the following mapping
\begin{equation*}
\begin{split}
\varphi:& W_F\to W^F\\
&1_G\mapsto 1_{G\setminus F}
\end{split}
\end{equation*}
\textbf{Claim:} $\varphi$ is a homeomorphism.\\
Hence, since the space $W_F$ is an open subset of $\X_T$ and $1_F\in W_F$, applying Lemma \ref{PropSubspace} we get $W_F$ is not bisequential at $1_F$, then by Lemma $\ref{PropHomeo}$ $W^F$ not bisequential at $1_O=\varphi(1_F)$. Therefore, by Lemma $\ref{PropSubspace}$, we get $\X_T$ not bisequential at $1_O$, that is our conclusion.\\
It remains to prove the claim. The map $\varphi$ is clearly well-defined and continuous. Moreover $\varphi$ is injective and surjective, and since $W_F$ is a closed subspace of $\X_T$ and $\X_T$ is a compact space, $W_F$ is compact and $\varphi$ is a closed map. This concludes the proof.
\end{proof}

We are now ready to prove the main result of this section.

\begin{proof}[Proof of Theorem \ref{ThmCarattXT}]
We showed at the very beginning of the section that the two stated conditions are necessary, hence the "only if" part follows.
For this reason we assume that $T$ is a tree of cardinality less than the first measurable cardinal and each of its branches has countable cardinality.\\
In order to prove that the space $\X_T$ is bisequential we are going to combine Theorem \ref{ThmCarattBiseq} with Lemma \ref{LemEmptyFunction}. Let $\{V_{(s,t)}\}_{(s,t)\in T\times T}$ be the local subbase of $1_O$ defined as follows
\begin{equation*}
V_{(s,t)}=\{1_F\in\X_T: (s,t)\notin F\},
\end{equation*}
for any choice of $(s,t)\in T\times T$. Let $\I$ be an ideal on $T\times T$ that satisfies conditions $(i)$ and $(ii)$ of Theorem \ref{ThmCarattBiseq}. We are going to prove that $\I$ does not satisfy the condition $(iii)$.\\
At first we notice that for any $A,B\subset T$ incomparable, we have
\begin{equation}\tag{$*$}\label{IncomparableDom}
(\X_T\setminus \bigcap_{(r_1,s_1)\in A\times T} V_{(r_1,s_1)})\cap (\X_T\setminus \bigcap_{(r_2,s_2)\in B\times T}V_{(r_2,s_2)}) =\emptyset,
\end{equation}
since by definition of $\X_T$ every function has totally ordered domain. Since $\F_{\I}$ has the finite intersection property, we have that at least one between $A\times T$ and $B\times T$ belongs to $\I$. The same argument can be used to prove that
\begin{equation}\tag{$**$}\label{IncomparableCodom}
(\X_T\setminus \bigcap_{(r_1,s_1)\in T\times A} V_{(r_1,s_1)})\cap (\X_T\setminus \bigcap_{(r_2,s_2)\in T\times B}V_{(r_2,s_2)}) =\emptyset.
\end{equation}
since every decreasing function with totally ordered domain has also totally ordered codomain.
Consider for every $t\in T$ and $A\subset T$ the sets
\begin{equation*}
\begin{split}
S_t&=(\uparrow t)\times T,\\
S_A&=\bigcup_{t\in A} S_t.
\end{split}
\end{equation*}
Suppose that for some $t\in T$ we have $S_{t}\notin \I$, then if $A\subset T$ incomparable with $t$, we have $S_A\in \I$ by what stated before. Furthermore, if $S_t\in \I$ and $t\leq s$, we have $S_s\subset S_t$, hence $S_s\in \I$. Therefore the subset
\begin{equation*}
Z_1=\{t\in T:S_t\notin \I\}
\end{equation*}
is totally ordered and if $t\in Z_1$, then $s\in Z_1$ for every $s\leq t$. Therefore $Z_1$ is countable. Let $\beta<\omega_1$ be the least countable ordinal such that $Z_1\subset T_{\beta}$. Let $A\subset \Lev_{\alpha}(T)$, for some $\alpha<\omega_1$, we set $c(A)=\Lev_{\alpha}(T)\setminus A$. From what we have observed in the first part of the proof, we obtain $S_{c(t)}\in \I$, for every $t\in Z_1$. In particular we have $(T_{\beta}\setminus Z_1)\times T=\bigcup_{t\in Z_1}S_{c(t)}$. 
Now we are going to cover the set $(T\setminus T_{\beta})\times T$ with a countable subfamily of $\I$. In order to do that we define a family $\I_{\beta}$ on $\Lev_{\beta}(T)$ in the following way: $A\in \I_{\beta}$ if and only if $S_{A}\in \I$. If $\Lev_{\beta}(T)\in \I_{\beta}$, then $S_{\Lev_{\beta}(T)}=T\setminus T_{\beta}$, hence we have covered $(T\setminus Z_1)\times T$ with countable elements of $\I$. 
Thus let us assume that $\Lev_{\beta}(T)\notin \I_{\beta}$, hence the family $\I_{\beta}$ is an ideal, and it is maximal by (\ref{IncomparableDom}), since $A$ and $c(A)$ are incomparable, so at least one among $S_{A}$ and $S_{c(A)}$ belongs to $\I$. 
We observe that, since $Z_1\cap\Lev_{\beta}(T)=\emptyset$, we have $\Fin(\Lev_{\beta}(T))\subset \I_{\beta}$, therefore $\I_{\beta}$ is nonprincipal. Hence since the cardinality of $\Lev_{\beta}(T)$ is less than the first measurable cardinal there exists a countable family $\{A_i\}_{i\in\omega}\subset \I_{\beta}$, such that $\Lev_{\beta}(T)=\bigcup_{i\in\omega}A_i$. Thus we obtain $(T\setminus T_{\beta})\times T= \bigcup_{i\in\omega}S_{A_i}$. Thus we obtained a countable subfamily of $\I$ covering $(T\setminus Z_1)\times T$.\\
We repeat the argument with
\begin{equation*}
\begin{split}
S^t&=T\times(\uparrow t),\\
S^A&=\bigcup_{t\in A} S^t.
\end{split}
\end{equation*}
Hence we obtain a countable subfamily of $\I$ covering $T\times (T\setminus Z_2)$, where $Z_2=\{t\in T:S^t\notin \I\}$. Hence since $Z_1\times Z_2$ is countable and $\Fin(T\times T)\subset \I$, we have that $\I$ does not satisfy condition $(iii)$. Therefore $\X_T$ is a bisequential space.
\end{proof}

As an immediate corollary of the previous theorem we obtain the following result, which answers the Question 6.2 of \cite{Ciesla}.

\begin{cor}
Let $T$ be an Aronszajn tree. Then $\X_T$ is a bisequential space.
\end{cor}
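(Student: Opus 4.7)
The plan is to apply Theorem \ref{ThmCarattXT} directly: it suffices to verify that an Aronszajn tree $T$ satisfies the two conditions in that characterization, namely (a) $T$ has no uncountable branches, and (b) $|T|$ is strictly smaller than the first measurable cardinal.

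Condition (a) is literally part of the definition of an Aronszajn tree recalled in Section \ref{decfunctrees}: all branches of $T$ are countable. Condition (b) is almost as immediate. By definition $T$ has height $\omega_1$ and every level $\Lev_{\alpha}(T)$ is at most countable, so
\begin{equation*}
|T| \;=\; \Bigl|\bigcup_{\alpha<\omega_1}\Lev_{\alpha}(T)\Bigr| \;\leq\; \omega_1\cdot \aleph_0 \;=\; \aleph_1.
\end{equation*}
Since every measurable cardinal is (strongly) inaccessible, and in particular strictly greater than $\aleph_1$, we obtain $|T|\leq \aleph_1 <$ first measurable cardinal.

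Both hypotheses of Theorem \ref{ThmCarattXT} are therefore satisfied, and the conclusion that $\X_T$ is bisequential follows. There is no real obstacle here; the corollary is essentially a bookkeeping consequence of the main theorem of the section, whose whole point was to reduce bisequentiality of $\X_T$ to these two combinatorial properties of $T$ that Aronszajn trees enjoy by construction.
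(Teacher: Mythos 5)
Your proof is correct and matches the paper's intent exactly: the paper states this as an immediate corollary of Theorem \ref{ThmCarattXT}, relying on precisely the two observations you make (countable branches by definition, and $|T|\leq\aleph_1$ which is below the first measurable cardinal since measurables are inaccessible). Nothing further is needed.
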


\section{Bisequentiality on $\Y_T$ spaces}\label{SectionYT}

In Section \ref{decfunctrees} we have introduced, for a tree $T$, the class of $\Y_T$ spaces (we recall that $\Y_T$ is the space of strictly decreasing function on $T$). The core of this section is to characterize $\Y_T$ spaces in terms of bisequentiality. It turns out that the only trees for which $\Y_T$ is bisequential are the countable ones.\\ 
We recall that for every tree $T$ we have $\X_T \subset \Y_T$. Hence, since the bisequentiality is preserved by taking subspaces, by Theorem \ref{ThmCarattXT} we can restrict our attention on trees with no uncountable branches and with cardinality strictly less than the first measurable cardinal. Let us consider a tree $T$ with an uncountable antichain $C$. We may suppose that $|C|=\omega_1$, hence we can enumerate it as $C=\{t_{\alpha}\}_{\alpha<\omega_1}$. We observe that for every $\beta\leq \omega_1$, the partial function $f_{\beta}:T\to T$ defined on $\{t_{\alpha}\}_{\alpha<\beta}$ as $f_{\beta}(t_{\alpha})=t_{\alpha}$, for $\alpha<\beta$, is strictly decreasing. Hence, denoting by $F_{\beta}$ the graph of the partial function $f_{\beta}$, we get that the transfinite sequence $\{1_{F_{\beta}}\}_{\beta\leq\omega_1}$ is a homeomorphic copy of $[0,\omega_1]$ into $\Y_T$. Whence, since $[0,\omega_1]$ is not a bisequential space we get that the space $\Y_T$ is not bisequential as well.\\
Let $T$ be a tree of countable size. Hence, since $T$ has at most countable antichains, we get that $\Y_T$ is a Corson compact space. Moreover, since the set of all finitely supported elements of $\Y_T$ is countable and dense in $\Y_T$, we get that $\Y_T$ is a separable Corson compact space, hence it is a metrizable space, in particular it is bisequential.\\
Let us finally consider the case when $T$ is a Suslin tree. Let us start by proving an useful lemma. We limit ourself to Suslin trees, since this is the only case left to be studied. 

\begin{lem}\label{LemmaNonconfrontabili}
Let $T$ be a Suslin tree and $P_1,...,P_n$ be a finite family of uncountable subsets of $T$. Then there exists $(t_1,...,t_n)\in P_1\times...\times P_n$ such that $t_i\perp t_j$ for any $i\neq j$.
\end{lem}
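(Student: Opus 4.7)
The plan is to proceed by induction on $n$. The case $n=1$ is trivial: any element of $P_1$ works. For the inductive step, the strategy is to first extract $t_1 \in P_1$ such that, after removing from each $P_i$ (for $i \geq 2$) the elements comparable with $t_1$, the remaining sets
\[
Q_i = \{s \in P_i : s \perp t_1\}
\]
are still uncountable. Applying the inductive hypothesis to $Q_2, \ldots, Q_n$ then yields $t_2, \ldots, t_n$ pairwise incomparable and each automatically incomparable with $t_1$, completing the proof.

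To find a suitable $t_1$, for each $i=2,\ldots,n$ I would introduce the auxiliary set
\[
E_i = \{t \in T : |P_i \setminus \suc t| \leq \aleph_0\},
\]
consisting of those $t \in T$ that lie below all but countably many elements of $P_i$. I expect the main obstacle to be the claim that each $E_i$ is countable, which is the heart of the argument. My plan is to establish this by showing that $E_i$ is a chain: for $t, t' \in E_i$, the sets $P_i \setminus \suc t$ and $P_i \setminus \suc t'$ are countable while $P_i$ itself is uncountable, so $P_i \cap \suc t \cap \suc t'$ must be non-empty. Picking any $s$ in this intersection, both $t$ and $t'$ lie in the well-ordered set $\{r \in T : r \leq s\}$ and are therefore comparable. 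Since $T$ is Suslin, every chain is contained in a branch and hence is countable, which yields the countability of $E_i$.

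Granting this, $E = \bigcup_{i=2}^{n} E_i$ is countable, and since $P_1$ is uncountable one can choose $t_1 \in P_1 \setminus E$. For each $i \geq 2$, the set $P_i \cap \{s \in T : s \leq t_1\}$ is a chain, hence countable, while $P_i \setminus \suc t_1$ is uncountable by the choice $t_1 \notin E_i$; combining these,
\[
Q_i = P_i \setminus \bigl(\{s \in T : s \leq t_1\} \cup \suc t_1\bigr)
\]
is uncountable. Applying the inductive hypothesis to $Q_2, \ldots, Q_n$ then produces pairwise incomparable $t_2, \ldots, t_n$, and since each $t_i \perp t_1$ by construction, the tuple $(t_1, \ldots, t_n)$ has the required property.
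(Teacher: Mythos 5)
Your proof is correct, and it takes a genuinely different route from the paper's. The paper proves a stronger statement by induction, namely that one can shrink $P_1,\dots,P_n$ to pairwise incomparable \emph{uncountable} subsets $P_1',\dots,P_n'$; its base case $n=2$ first normalizes each $P_i$ so that every point has uncountably many successors inside $P_i$, then picks two incomparable points of $P_1$ and uses the countability of the levels of $T$ to find a node $z$ on a fixed level with uncountably many successors in $P_2$, one of the two points being forced to be incomparable with $z$. Your argument instead selects the representatives one at a time: the key observation is that $E_i=\{t\in T:|P_i\setminus \suc t|\leq\aleph_0\}$ is a chain (any two of its elements have a common upper bound in $P_i$, hence are comparable), so it is countable, and a point $t_1\in P_1$ avoiding $\bigcup_{i\geq 2}E_i$ leaves each $\{s\in P_i:s\perp t_1\}$ uncountable because the comparable part splits into the countable predecessor chain of $t_1$ and the set $P_i\cap\suc t_1$, whose complement in $P_i$ is uncountable by the choice of $t_1$. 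What your approach buys: it avoids the preliminary normalization, and it only uses that all chains of $T$ are countable --- neither the countability of levels nor the absence of uncountable antichains is needed --- so it actually proves the lemma for any tree without uncountable branches, a strictly larger class than Suslin trees. The paper's approach, on the other hand, yields the formally stronger conclusion of pairwise incomparable uncountable refinements, which can be reused if one needs more than single representatives.
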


\begin{proof}
We may suppose without loss of generality that every uncountable subset $S\subset T$ considered in the present proof satisfies the following property: for every $x\in S$, we have $|\suc x\cap S|>\aleph_0$.\\
We will show by induction, in fact, that for every finite family $P_1,...,P_n$ of uncountable subsets of $T$ there exists a finite family $P'_1,...,P'_n$ of disjoint uncountable subsets of $T$ such that $P'_i\subset P_i$ for every $i=1,...,n$ and $P_{i}^{'}\perp P_{j}^{'}$ whenever $i\neq j$.\\
In order to do that, let us consider $n=2$: let $x,y\in P_1$ such that $x\perp y$, since $P_1$ is a Suslin tree such elements exist, and set $\beta=\min(\htte(x, T),\htte(y, T))$. Hence there exists $z\in \Lev_\beta(T)$ such that $P_2\cap \suc z$ is an uncountable subset of $P_2$. Then, we obtain $x\perp z$ or $y\perp z$. Suppose without loss of generality $x\perp z$, then $P'_1=P_1\cap \suc x$ and $P'_2=P_2\cap \suc z$ are as required.\\
Finally, let $n\geq 2$ and assume that for any family $P_1,...,P_n$ the thesis holds. Let $P_1,...,P_{n+1}$ be any family of size $n+1$, we get the family $P'_1,...,P'_{n}$ by applying the induction hypothesis to $P_1,...,P_{n}$. Repeating the argument with $P'_2,...,P'_n, P_{n+1}$ one gets $P''_2,...,P''_n, P'_{n+1}$ and then again with $P'_1,P'_{n+1}$ to get $P''_1,P''_{n+1}$. Therefore
$P''_1,...,P''_{n+1}$ is the desired family.
\end{proof}

Let $T$ be a tree and $A\subset T\times T$. Let $s,t\in T$, we set the sections of $A$ as follows: 

\begin{equation*}
\begin{split}
&A_t=\{s\in T:(t,s)\in A\};\\
&A^s=\{t\in T:(t,s)\in A\}.
\end{split}
\end{equation*}

Let us recall that by $O$ we denote the graph of the empty function.

\begin{prop}\label{biseqSuslin}
Let $T$ be a Suslin tree. Then $\Y_T$ is not a bisequential space.
\end{prop}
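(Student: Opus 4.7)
The plan is to apply Theorem \ref{ThmCarattBiseq} at the point $1_O \in \Y_T$, using the natural local subbase $\{V_{(s,t)}\}_{(s,t)\in T\times T}$ with $V_{(s,t)}=\{1_G\in\Y_T : (s,t)\notin G\}$, and to exhibit an ideal $\I$ on $T\times T$ satisfying conditions $(i)$, $(ii)$, $(iii)$ of that theorem. Since non-bisequentiality at some point implies non-bisequentiality, this will suffice.

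I will take
\[
\I=\{A\subset T\times T : A\subset (Z_1\times T)\cup(T\times Z_2) \text{ for some countable } Z_1,Z_2\subset T\}.
\]
This is a proper ideal (closed under subsets and finite unions, and $T\times T\notin \I$ since $|T|=\aleph_1$), and condition $(i)$ is immediate, taking $Z_1=\{s\}$, $Z_2=\emptyset$. Condition $(iii)$ follows because a countable union of sets of that form is again of that form: if $S_k\subset (Z_1^k\times T)\cup(T\times Z_2^k)$ with $Z_1^k,Z_2^k$ countable, then $\bigcup_{k\in\omega}S_k\subset ((\bigcup_k Z_1^k)\times T)\cup(T\times \bigcup_k Z_2^k)$, and one finds $(s,t)\in T\times T$ with $s\notin\bigcup_k Z_1^k$ and $t\notin\bigcup_k Z_2^k$ since both unions are countable while $T$ is not.

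The heart of the matter is FIP, condition $(ii)$. Given $S_1,\dots,S_n\in\I$ and $R_1,\dots,R_m\notin\I$, I pick countable $Z_1,Z_2\subset T$ with $\bigcup_{i}S_i\subset (Z_1\times T)\cup(T\times Z_2)$ and set $U_j=R_j\cap ((T\setminus Z_1)\times(T\setminus Z_2))$ for $j\leq m$. The key observation is that $\pi_1(U_j)$ is uncountable: otherwise one would have $R_j\subset((Z_1\cup \pi_1(U_j))\times T)\cup(T\times Z_2)$, forcing $R_j\in\I$. Lemma \ref{LemmaNonconfrontabili} then produces pairwise incomparable $s_j^*\in \pi_1(U_j)$ for $j=1,\dots,m$; choosing any $t_j^*\in T$ with $(s_j^*,t_j^*)\in U_j$, the set $F=\{(s_j^*,t_j^*):j\leq m\}$ has antichain domain, hence is (vacuously) the graph of a strictly decreasing partial function, so $1_F\in\Y_T$. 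By construction $s_j^*\notin Z_1$ and $t_j^*\notin Z_2$, so $F\cap\bigcup_i S_i=\emptyset$, and $F\cap R_j\ni (s_j^*,t_j^*)$ for every $j$, which is exactly the FIP.

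The main obstacle, and the reason the argument is Suslin-specific, is securing the pairwise incomparability of the representatives $s_j^*$, for which Lemma \ref{LemmaNonconfrontabili} is indispensable; the fact that the domain of $f\in\Y_T$ need not be totally ordered is precisely what allows antichain domains to be used freely (contrast with the $\X_T$ case). Once the uncountability of $\pi_1(U_j)$ is extracted from the definition of $\I$, the remaining verification of $(i)$--$(iii)$ is routine, and Theorem \ref{ThmCarattBiseq} yields that $\Y_T$ is not bisequential at $1_O$.
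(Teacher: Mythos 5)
Your proof is correct, and it follows the same overall strategy as the paper --- apply Theorem \ref{ThmCarattBiseq} at $1_O$ with the subbase $\{V_{(s,t)}\}_{(s,t)\in T\times T}$ and use Lemma \ref{LemmaNonconfrontabili} to secure pairwise incomparable witnesses for the finite intersection property --- but the ideal you choose is genuinely different. The paper takes $\I=\{A\subset T\times T:\ \text{for all but countably many } t,\ |A_t|\le\aleph_0 \text{ and } |A^t|\le\aleph_0\}$, modelled on Cie\'sla's $\sigma$-ideal, whereas yours is the ideal generated by single rows and columns, i.e.\ sets covered by $(Z_1\times T)\cup(T\times Z_2)$ with $Z_1,Z_2$ countable. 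Yours is strictly smaller: for instance the graph of a bijection $T\to T$ belongs to the paper's ideal but not to yours, since its restriction to $(T\setminus Z_1)\times T$ has uncountable image. A smaller ideal makes condition $(iii)$ immediate but leaves more sets $R\notin\I$ for which the FIP must be checked; your observation that $\pi_1\bigl(R\cap((T\setminus Z_1)\times(T\setminus Z_2))\bigr)$ must be uncountable handles all of them uniformly, and in particular you avoid the paper's case distinction between sets with uncountably many uncountable rows and those with uncountably many uncountable columns, as well as the bookkeeping with the ordinals $\alpha,\beta,\tilde\alpha,\tilde\beta$ used there to steer the chosen pairs away from the uncountable sections of the set $A\in\I$. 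Both arguments ultimately rest on the same two pillars: the Suslin property via Lemma \ref{LemmaNonconfrontabili}, and the fact that a partial function with antichain domain is vacuously strictly decreasing, so the finite set $F$ you build gives a point of $\Y_T$. Your version is somewhat more economical.
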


\begin{proof}
Let $(t,s)\in T\times T$ and 
\begin{equation*}
V_{(t,s)}=\{1_{F}\in \Y_T:(t,s)\notin F\}.
\end{equation*}
Since the family $\{V_{(t,s)}\}_{(t,s)\in T\times T}$ is a local subbase of $1_O$, we are going to use the Theorem \ref{ThmCarattBiseq}. In order to do that, borrowing the idea from \cite{Ciesla}, let us define a suitable ideal on $T\times T$:
\begin{equation*}
\I= \{A\subset T\times T: \mbox{ for all but countably many }t, |A_t|\leq \aleph_0 \mbox{ and } |A^t|\leq \aleph_0\}.
\end{equation*}
The ideal $\I$ clearly satisfies the conditions $(i)$ and $(iii)$ of Theorem \ref{ThmCarattBiseq}. It remains to prove that the family $\F_{\I}$, defined as in Theorem \ref{ThmCarattBiseq}, has the finite intersection property.\\
In order to do this let $A\in \I$ and $B_1,...,B_n$ subsets of $T\times T$ which do not belong to $\I$. We observe that for each $1\leq j \leq n$ at least one of the following assertions is satisfied:
\begin{itemize}
\item $|(B_j)_t|=\omega_1$ for uncoutably many $t\in T$, in this case let us define $U_j$ as the subset of such elements.
\item $|(B_j)^s|=\omega_1$ for uncoutably many $s\in T$, in this case let us define $W_j$ as the subset of such elements.
\end{itemize}
We may assume without loss of generality that there exists $1\leq m\leq n+1$ such that for $0<j<m$ we have $|(B_j)_t|=\omega_1$ for uncountably many $t\in T$ and for $m\leq j< n+1$ we have $|(B_j)^s|=\omega_1$ for uncountably many $s\in T$.\\
Let us define 
\begin{equation*}
\alpha=\sup\{\htte(t,T):t\in A, |A_t|=\omega_1\},\:\:\:\:  \beta=\sup\{\htte(s,T):s\in A, |A^s|=\omega_1\}.
\end{equation*}
We observe that, since $A\in \I$, both $\alpha$ and $\beta$ are countable. \\
Since every $W_j$ is uncountable, there is $s_j\in W_j\cap T_{>\beta}$, for each $m\leq j\leq n$.
Let $\tilde{\alpha}=\max\{\sup\{\htte(t,T): t\in A^{s_j},m\leq j\leq n\},\alpha\}$, since $\htte(s_j,T)>\beta$ and $m\leq j\leq n$, we have that $\tilde{\alpha}<\omega_1$. Define $P_j=(B_j)^{s_j}\cap T_{>\tilde{\alpha}}$ for all $m\leq j\leq n$, they are uncountable since $s_j\in W_j$.\\ 
For every $0< i< m$ consider $P_i=U_i\cap T_{>\alpha}$, they are uncountable since every $U_i$ is uncountable.
Hence by Lemma \ref{LemmaNonconfrontabili}, there are $t_k\in P_k$ for all $0<k<n+1$ such that $t_k\perp t_{k'}$ for $k\neq k'$. Notice for $m\leq j\leq n$ we have $(t_j,s_j)\in B_j\setminus A$.\\
Finally, let $\tilde{\beta}=\max\{\sup\{\htte(s,T): s\in A^{t_i},0< i< m\},\beta\}$, since $\htte(t_i,T)>\alpha$ and $0<i< m$, we have that $\tilde{\beta}<\omega_1$. Therefore there are $s_i \in (B_i)_{t_i}\cap T_{>\tilde{\beta}}$, for any $0<i<m$. Notice for $0<i< m$ we have $(t_i,s_i)\in B_i\setminus A$.\\
Let $F=\bigcup_{k=1}^{n}(t_k,s_k)\subset T\times T$. We observe that, since $\{t_k\}_{k=1,...,n}$ are pairwise incomparable elements, we have $1_F\in \Y_T$. Therefore we obtain
\begin{equation*}
1_F\in \bigcap_{(t,s)\in A}V_{(t,s)}\cap\bigcap_{i=1}^{n} (\Y_T\setminus \bigcap_{(t,s)\in B_j}V_{(t,s)}).
\end{equation*}
Hence $\I$ satisfies also $(ii)$, so $\Y_T$ is not bisequential at $1_O$.
\end{proof}

Combining the above observations with Proposition \ref{biseqSuslin} we get the following characterization of $\Y_T$ spaces in terms of bisequentiality.

\begin{thm}
Let $T$ be a tree. The topological space $\Y_T$ is bisequential if and only if $T$ is countable.
\end{thm}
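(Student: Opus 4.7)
The plan is to prove the two implications separately, assembling the pieces already established in this section. The backward direction will go through metrizability; the forward direction will classify an uncountable $T$ into cases and, in each case, exhibit a subspace that obstructs bisequentiality.

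For the backward direction, suppose that $T$ is countable. Then every antichain of $T$ is countable, so by the observation preceding Proposition \ref{PropEberYT} the space $\Y_T$ is a Corson compact space. Moreover, $T\times T$ is countable, and the set of $1_F\in\Y_T$ with $F$ finite is a countable dense subset of $\Y_T$, so $\Y_T$ is separable. A separable Corson compact space is metrizable, and every metrizable space is bisequential.

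For the forward direction, suppose that $\Y_T$ is bisequential. Since bisequentiality passes to subspaces and $\X_T\subset \Y_T$, Theorem \ref{ThmCarattXT} forces $T$ to have cardinality strictly less than the first measurable cardinal and to contain no uncountable branches. Assume for a contradiction that $T$ is uncountable. If $T$ has an uncountable antichain, then the construction given immediately before Lemma \ref{LemmaNonconfrontabili} embeds $[0,\omega_1]$ homeomorphically into $\Y_T$; since $[0,\omega_1]$ is not bisequential this contradicts the hypothesis. Otherwise every antichain of $T$ is countable. Each level of $T$ is an antichain, hence countable, so the uncountability of $T$ forces its height to be at least $\omega_1$; combined with the absence of uncountable branches this makes $T$ an Aronszajn tree, and the countability of all antichains then makes $T$ a Suslin tree. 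Proposition \ref{biseqSuslin} now yields that $\Y_T$ is not bisequential, the desired contradiction.

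The theorem itself is essentially a consolidation step: the substantive technical obstacle is concentrated in Proposition \ref{biseqSuslin}, already handled via Theorem \ref{ThmCarattBiseq} and the construction of the ideal $\I$ on $T\times T$. The only care required in the consolidation is the case analysis for uncountable trees satisfying the necessary conditions of Theorem \ref{ThmCarattXT}, namely that such a tree either contains an uncountable antichain or is a Suslin tree, both of which have been treated.
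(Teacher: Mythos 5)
Your proof is correct and follows essentially the same route as the paper: the backward direction via separable Corson compactness and metrizability, and the forward direction by combining the subspace argument through $\X_T$ and Theorem \ref{ThmCarattXT}, the embedded copy of $[0,\omega_1]$ in the presence of an uncountable antichain, and Proposition \ref{biseqSuslin} for the remaining Suslin-tree case. The paper states this consolidation in one line, whereas you spell out the case analysis (in particular why the remaining case is exactly a Suslin tree), which is a welcome clarification but not a different argument.
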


We recall that each uniform Eberlein compact space of weight less than the first measurable cardinal is bisequential \cite[Theorem 5.2]{Ciesla} (the original result is due to P. Nyikos), thus we can state, using the previous theorem, the following characterization of $\Y_T$ uniform Eberlein spaces.

\begin{cor}
Let $T$ be a tree. The space $\Y_T$ is uniform Eberlein if and only if is metrizable if and only if $T$ is countable.
\end{cor}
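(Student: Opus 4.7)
The strategy is to close a cycle of implications: $T$ countable $\Rightarrow$ $\Y_T$ metrizable $\Rightarrow$ $\Y_T$ uniform Eberlein $\Rightarrow$ $T$ countable. The first arrow is already recorded earlier in this section --- for countable $T$ the set of finitely supported functions is countable and dense in $\Y_T$, which together with the Corson property (all antichains of $T$ being countable) gives that $\Y_T$ is separable Corson compact, hence metrizable. The second arrow is classical: every compact metric space embeds into the Hilbert cube $[0,1]^{\omega}$, which sits as a norm-bounded, weakly compact subset of $\ell_{2}$, so compact metrizable spaces are uniform Eberlein.

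The substantive step is the third arrow. Rather than following the bisequentiality route suggested in the preamble (which would first require bounding the weight of $\Y_T$ by the first measurable cardinal), my plan is to argue directly using the Eberlein structure theorem established in Section \ref{decfunctrees}. Assume $\Y_T$ is uniform Eberlein; in particular it is Eberlein, so the reformulation of Proposition \ref{PropEberYT} given after its proof yields a decomposition
\begin{equation*}
T=N\cup\bigcup_{i<\alpha}C_{i}\qquad(\alpha\leq\omega),
\end{equation*}
with $N$ countable and each $C_{i}$ an uncountable chain. Suppose for contradiction that $\alpha\geq1$, and fix one such uncountable chain $C_{0}$. Since chains in trees are well-ordered, $C_{0}$ contains an initial subchain $D$ order-isomorphic to $\omega_{1}$. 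Extending by zero induces a closed embedding of $\X_{D}$ into $\X_{T}$, under which $\X_{D}$ is homeomorphic to $\X_{\omega_{1}}$; moreover $\X_{T}$ is itself closed in $\Y_{T}$. Because the class of uniform Eberlein compacta is hereditary to closed subspaces, $\X_{\omega_{1}}$ would then be uniform Eberlein, directly contradicting the Leiderman--Sokolov theorem recalled in the introduction. Therefore $\alpha=0$ and $T=N$ is countable.

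The mildly delicate point in the plan is extracting an order-isomorphic copy of $\omega_{1}$ inside an uncountable chain of $T$; this comes for free from the well-orderedness of chains in trees together with the observation that any well-ordered set of cardinality $\geq\aleph_{1}$ has an initial segment of order type $\omega_{1}$. Once this is set, the obstruction created by $\X_{\omega_{1}}$ not being uniform Eberlein does all the work, and no weight/measurable-cardinal bookkeeping is needed. To align with the route advertised by the passage preceding the corollary, one may alternatively combine \cite[Theorem 5.2]{Ciesla} (uniform Eberlein of weight below the first measurable cardinal $\Rightarrow$ bisequential) with the previous theorem, after first observing via the structural argument above that $|T|$, and hence the weight of $\Y_T$, must indeed lie below the first measurable cardinal; the two routes are essentially interchangeable but the direct one is slightly more economical.
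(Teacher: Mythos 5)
Your proposal is correct, but for the substantive direction (uniform Eberlein $\Rightarrow$ $T$ countable) it takes a genuinely different route from the paper. The paper obtains the corollary as a one-line consequence of the bisequentiality theorem for $\Y_T$: by \cite[Theorem 5.2]{Ciesla} a uniform Eberlein compact of weight below the first measurable cardinal is bisequential, and the preceding theorem says $\Y_T$ is bisequential only for countable $T$. You instead bypass bisequentiality entirely: from uniform Eberlein you pass to Eberlein, invoke the reformulation of Proposition \ref{PropEberYT} to write $T=N\cup\bigcup_{i<\alpha}C_i$ with $N$ countable and the $C_i$ uncountable chains, extract an initial segment of order type $\omega_1$ from any such chain (chains in a tree are indeed well-ordered), and embed $\X_{\omega_1}$ as a closed subspace, contradicting the Leiderman--Sokolov result that $\X_{\omega_1}$ is not uniform Eberlein. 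Both arguments are sound; what yours buys is twofold. First, it is independent of the entire bisequentiality machinery of Section 4, relying only on Section 2 and \cite{LeidSoko}. Second, it dispenses with the weight hypothesis in \cite[Theorem 5.2]{Ciesla}, which the paper's one-liner leaves implicit: to apply that theorem one must first know that the weight of $\Y_T$ is below the first measurable cardinal, and justifying this for an arbitrary uniform Eberlein $\Y_T$ essentially requires the structural decomposition you use anyway. You correctly identify this bookkeeping point in your closing remark, and your direct route is arguably the cleaner of the two. The remaining implications ($T$ countable $\Rightarrow$ metrizable via separable Corson, and metrizable $\Rightarrow$ uniform Eberlein via the Hilbert cube in $\ell_2$) match the paper's intent and are standard.
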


\end{document}